\numberwithin{equation}{section}
\newtheoremstyle{thmlemcorr}{10pt}{10pt}{\itshape}{}{\bfseries}{.}{10pt}{{\thmname{#1}\thmnumber{ #2}\thmnote{ (#3)}}}
\newtheoremstyle{thmlemcorr*}{10pt}{10pt}{\itshape}{}{\bfseries}{.}\newline{{\thmname{#1}\thmnumber{ #2}\thmnote{ (#3)}}}
\newtheoremstyle{remexample}{10pt}{10pt}{}{}{\bfseries}{.}{10pt}{{\thmname{#1}\thmnumber{ #2}\thmnote{ (#3)}}}
\newtheorem{theorem}{Theorem}
\numberwithin{theorem}{section}
\newtheorem{proposition}[theorem]{Proposition}
\newtheorem{definition}[theorem]{Definition}
\theoremstyle{thmlemcorr*}
\newtheorem{theorem*}{Theorem}
\newtheorem{lemma*}[theorem]{Lemma}
\newtheorem{corollary*}[theorem]{Corollary}
\newtheorem{proposition*}[theorem]{Proposition}
\newtheorem{problem*}[theorem]{Problem}
\newtheorem{conjecture*}[theorem]{Conjecture}
\newtheorem{definition*}[theorem]{Definition}
\theoremstyle{remexample}
\newcommand{\Ccal}{\mathcal{C}}
\newcommand{\Mcal}{\mathcal{M}}
\newcommand{\Hcal}{\mathcal{H}}
\newcommand{\Fcal}{\mathcal{F}}
\newcommand{\Lcal}{\mathcal{L}}
\newcommand{\dx}{{\rm d} {x}}
\newcommand{\dt}{{\rm d} t }
\newcommand{\dy}{{\rm d} y }
\newcommand{\ep}{\varepsilon}
\DeclareMathOperator{\id}{id}
\newcommand{\norm}[1]{\|#1\|}
\newcommand{\R}{\mathbb{R}}
\newcommand{\loc}{\mathrm{loc}}
\newcommand{\TV}{\mathrm{TV}}
\newcommand{\eps}{\epsilon}
\newcommand{\f}{\frac}
\newcommand{\intR}{\int_0^\infty}
\renewcommand{\eps}{\varepsilon}
\renewcommand{\epsilon}{\varepsilon}
\renewcommand{\phi}{\varphi}
\newcommand{\comMarie}[1]{\textcolor{black}{#1}}
\newcommand{\comTomek}[1]{\textcolor{black}{#1}}
\begin{document}

\title[Relative entropy for growth-fragmentation equation]{Relative entropy method for measure solutions of the growth-fragmentation equation}
%\title{General Relative Entropy inequality for measure-valued solutions of a structured population model}
\author{Tomasz D\k{e}biec \and Marie Doumic \and Piotr Gwiazda \and Emil Wiedemann}
\address{\textit{Tomasz D\k{e}biec:} Institute of Applied Mathematics and Mechanics, University of Warsaw, Banacha 2, 02-097 Warszawa, Poland}
\email{t.debiec@mimuw.edu.pl}
\address{\textit{Marie Doumic:} Sorbonne Universit\'{e}s, Inria, UPMC Univ Paris 06, Lab. J.L. Lions  UMR CNRS 7598, Paris, France and Wolfgang Pauli Institute, c/o University of Vienna, Austria.}
\email{marie.doumic@inria.fr}
\address{\textit{Piotr Gwiazda:} Institute of Mathematics, Polish Academy of Sciences, \'Sniadeckich 8, 00-656 Warszawa, Poland}
\email{pgwiazda@mimuw.edu.pl}
\address{\textit{Emil Wiedemann:} Institute of Applied Mathematics, Leibniz University Hannover, Welfengarten~1, 30167 Hannover, Germany}
\email{wiedemann@ifam.uni-hannover.de}

\begin{abstract}
The aim of this study is to generalise recent results of the two last authors on entropy methods for measure solutions of the renewal equation to other classes of structured population problems. Specifically, we develop a generalised relative entropy inequality for the growth-fragmentation equation and prove asymptotic convergence to a steady-state solution, even when the initial datum is only a non-negative measure.
\end{abstract}

\keywords{measure solutions, growth-fragmentation equation, structured population, relative entropy, generalised Young measure}

\maketitle 

\section{Introduction}
Structured population models were developed for the purpose of understanding the evolution of a population over time -
and in particular to adequately describe the dynamics of a population by its distribution along some "structuring" variables representing e.g., age, size, or cell maturity.
These models, often taking the form of an evolutionary partial differential equation, have been extensively studied for many years.
The first age structure was considered in the early 20th century by Sharpe and Lotka \cite{SharpeLotka}, who already made predictions on the question of asymptotic behaviour of the population, see also \cite{Kermack1,Kermack2}. In the second half of the 20th century size\comMarie{-}structured models appeared first in \cite{BellAnderson, SinkoStreifer}. These studies gave rise to other physiologically structured models (age-size, saturation, cell maturity, etc.).

The object of this note is the growth-fragmentation model, which is found fitting in many different contexts: cell division, polymerisation, neurosciences, prion proliferation or even telecommunication. In its general linear form this model takes the form of the following equation.
\begin{equation}
\begin{aligned}
    \partial_t n(t,x) + \partial_x(g(x)n(t,x)) + B(x)n(t,x) &= \int_x^\infty k(x,y)B(y)n(t,y)\ \dy,\\
    g(0)n(t,0) &=0,\\
    n(0,x) & = n^0(x).
\end{aligned}    
\end{equation}
Here
$n(t,x)$ represents the concentration of individuals of size $x\geq0$ at time $t>0$,
$g(x)\geq0$ is their growth rate,
$B(x)\geq0$ is their division rate and
$k(x,y)$ is the \comMarie{proportion} of individuals of size $x$ created out of \comMarie{the} division of individuals of size $y$.
This equation incorporates a very important phenomenon in biology - a competition between growth and fragmentation. Clearly they have opposite dynamics: growth drives the population towards a larger size, while fragmentation makes it smaller and smaller. Depending on which factor dominates, one can observe various long-time behaviour of the population distribution.

Many authors have studied the long-time asymptotics (along with well-posedness) of variants of the growth-fragmentation equation, see e.g. \cite{PR, M1, CCM, DG, mischler:frag}.
The studies which establish convergence, in a proper sense, of a (renormalised) solution towards a steady profile were until recently limited only to initial data in \comMarie{weighted} $L^1$ \comMarie{spaces}. The classical tools for such studies include a direct application of the Laplace transform and the semigroup theory\comMarie{~\cite{mischler:frag}. These methods could also provide an exponential rate of convergence, linked to the existence of a spectral gap}.

A different approach was developed by Perthame et al. in a series of papers \cite{MMP1, MMP2, PR}. Their Generalised Relative Entropy (GRE) method \comMarie{provides a way} to study long-time asymptotics of linear models even when no spectral gap is guaranteed - however failing to provide a rate of convergence, \comMarie{unless an entropy-entropy dissipation inequality is obtained~\cite{CCM}}.
Recently Gwiazda and Wiedemann \cite{GW} extended the GRE method to the case of the renewal equation with initial data in the space of non-negative Radon measures. Their result is motivated by the increasing interest in measure solutions to models of mathematical biology, see e.g. \cite{CarrilloGwiazda_11, GLM} for some recent results concerning well-posedness and stability theory in the space of non-negative Radon measures. The clear advantage of considering measure data is that it is biologically justified - it allows for treating the situation when a population is initially concentrated with respect to the structuring variable (and is, in particular, not absolutely continuous with respect to the Lebesgue measure). \comMarie{This is typically the case when departing from a population formed by a unique cell}. We refer also to the recent result of Gabriel \cite{Gabriel:2017vl}, who uses the Doeblin method to analyze the long-time behaviour of measure solutions to the renewal equatio
 n.

\comTomek{Let us remark that the method of analysis employed in the current paper is inspired by the classical relative entropy method introduced by Dafermos in~\cite{Dafermos1979}. In recent years this method was extended to yield results on measure-valued--strong uniqueness for equations of fluid dynamics~\cite{BrDeLeSz2011, GSGW, FGSW2016} and general conservation laws~\cite{tzavaras1, Christoforou2017, GwiazdaKreml2018}. See also~\cite{DebiecRelEntroSurvey} and refereces therein.
}

The purpose of this paper is to generalise the results of \cite{GW} to the case of a general growth-fragmentation equation. Similarly as in that paper we make use of the concept of a recession function to make sense of compositions of nonlinear functions with a Radon measure. However, the appearance of the term $H'(u_\epsilon(t,x))u_\epsilon(t,y)$ in the entropy dissipation (see \eqref{eq:entropydissipation} below), which mixes dependences on the variables $x$ and $y$, poses a novel problem, which is overcome by using generalised Young measures and time regularity. 

The current paper is structured as follows: in Section~\ref{sec2} we recall some basic results on Radon measures, recession functions and Young measures as well as introduce the assumptions of our model, in Section~\ref{sec3} we state and prove the GRE inequality, which is then used to prove a long-time asymptotics result in Section~\ref{sec4}.

\section{Description of the model}\label{sec2}

\subsection{Preliminaries}
% All the necessary info from measure theory, etc.
In what follows we denote by $\R_+$ the set $[0,\infty)$. By $\Mcal(\R_+)$ we denote the space of signed Radon measures on $\R_+$. By Lebesgue's decomposition theorem, for each $\mu\in\Mcal(\R_+)$ we can write
\[
\mu = \mu^a + \mu^s,
\]
where $\mu^a$ is absolutely continuous with respect to the Lebesgue measure $\Lcal^1$, and $\mu^s$ is singular. The space $\Mcal(\R_+)$ is endowed with the total variation norm
\[
\norm{\mu}_{\TV}\coloneqq \int_{\R_+}\mathrm{d}|\mu|,
\]
\comMarie{and we denote $\norm{\mu}_{\TV}=TV(\mu).$}
By the Riesz Representation Theorem we can identify this space with the dual space to the space $\Ccal_0(\R_+)$ of continuous functions on $\R_+$ which vanish at infinity. The duality pairing is given by
\[
\langle\nu,f\rangle\coloneqq\intR f(\xi)\ d\mu(\xi).
\]
\noindent
By $\Mcal^{+}(\R_+)$ we denote the set of positive Radon measures of bounded total variation.
We further define the $\phi$-weighted total variation by
\[
\norm{\mu}_{\TV_\phi}\coloneqq \int_{\R_+}\phi\mathrm{d}|\mu|
\]
and correspondingly the space $\Mcal^+(\R_+;\phi)$ of positive Radon measures whose $\phi$-weighted total variation is finite. \comMarie{We still denote $TV(\mu)=\norm{\mu}_{\TV_\phi}.$}
Of course we require that the function $\phi$ be non-negative. In fact, for our purposes $\phi$ will be strictly positive and bounded on each compact subset of $(0,\infty)$.

We say that a sequence $\nu_n\in\Mcal(\R_+)$ converges {\em weak$^*$} to some measure $\nu\in\Mcal(\R_+)$ if \[
\langle\nu_n,f\rangle\longrightarrow\langle\nu,f\rangle
\]
for each $f\in\Ccal_0(\R_+)$. %If such convergence holds true for each bounded continuous $f$, then we say that the sequence $\nu_n$ converges {\em narrowly} to $\nu$.

By a {\em Young measure} on $\R_+\times\R_+$ we mean a parameterised family $\nu_{t,x}$ of probability measures \comMarie{on $\R_+$}. More precisely, it is a weak$^*$-measurable function $(t,x)\mapsto\nu_{t,x}$, i.e. such that the mapping 
\[
(t,x)\mapsto\langle\nu_{t,x},f\rangle
\]
is measurable for each $f\in C_0(\R_+)$.
Young measures are often used to describe limits of weakly converging approximating sequences to a given problem. They serve as a way of describing weak limits of nonlinear functions of the approximate solution. Indeed, it is a classical result that a uniformly bounded measurable sequence $u_n$ generates a Young measure by which one represents the limit of $f(u_n)$, where $f$ is some non-linear function, see \cite{Ta1979} for sequences in $L^\infty$ and \cite{ball} for measurable sequences.

This framework was used by DiPerna in his celebrated paper \cite{DiPerna}, where he introduced the concept of an admissible measure-valued solution to scalar conservation laws. However, in more general context\comMarie{s} (e.g. for hyperbolic systems, where there is usually only one entropy-entropy-flux pair) one needs to be able to describe limits of sequences which exhibit oscillatory behaviour as well as concentrate mass. Such a framework is provided by {\em generalised Young measures}, first introduced in the context of incompressible Euler equations in \cite{DiPernaMajda}, and later developed by many authors. We follow the exposition of Alibert, Bouchitt\'{e} \cite{AlibertBouchitte} and Kristensen, Rindler \cite{KristensenRindler2012}. 

Suppose $f:\R^n \to\R_+$ is an even continuous function with at most linear growth, i.e.
\[
|f(x)| \leq C(1+|x|)
\]
for some constant $C$.
We define, whenever it exists, the \emph{recession function} of $f$ as
\begin{equation*}
    f^\infty(x)=\lim_{s\to\infty}\frac{f(sx)}{s}=\lim_{s\to\infty}\frac{f(-sx)}{s}.
\end{equation*}
\begin{definition}
	The set $\mathcal{F}(\R)$ of continuous functions $f:\R\to\R_+$ for which $f^\infty$ exists and is continuous on $\mathbb{S}^{n-1}$ is called the class of \emph{admissible integrands}.
\end{definition}

\comMarie{
By a {\em generalised Young measure} on $\Omega=\R_+\times\R_+$ we mean a parameterised family $(\nu_{t,x},m)$ where for $(t,x)\in \Omega$, $\nu_{t,x}$ is a family of probability measures \comMarie{on $\R$} and $m$ is a nonnegative Radon measure on $\Omega$. In the following, we may omit the indices for $\nu_{t,x}$ and denote it simply $(\nu,m).$}

The following result gives a way of representing weak$^{*}$ limits of sequences bounded in $L^1$ via a generalised Young measure. It was first proved in~\cite[Theorem~2.5]{AlibertBouchitte}. We state an adaptation to our simpler case.

\begin{proposition}\label{prop:Alibert}
	Let $(u_n)$ be a bounded sequence in $L^1_{loc}(\Omega; \mu, \R),$ where $\mu$ is a measure on $\Omega$.
	There exists a subsequence $(u_{n_k}),$ a nonnegative Radon measure $m$ on $\Omega$ and a parametrized family of probabilities $(\nu_\zeta)$ such that for any even function $f\in {\Fcal}(\R)$ we have
	\begin{equation}
	f(u_{n_k}(\zeta)) { \mu} \stackrel{*}{\rightharpoonup} \langle\nu_\zeta,f\rangle {\mu} + f^\infty m
	\end{equation}
\end{proposition}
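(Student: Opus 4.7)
The plan is to adapt the classical compactification argument of DiPerna--Majda and Alibert--Bouchitt\'e. The strategy is to embed the admissible integrands into continuous functions on a compact space, apply Banach--Alaoglu to a lifted sequence of measures on the product of $\Omega$ with this compactification, and then disintegrate the resulting limit, splitting off the concentration mass at infinity as the recession part.

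Concretely, I would use the homeomorphism $\beta\colon\R\to(-1,1)$, $\beta(y)=y/(1+|y|)$, and associate to each $f\in\Fcal(\R)$ the function
\[
\psi_f(t):=(1-|t|)\,f\!\left(\tfrac{t}{1-|t|}\right),\qquad t\in(-1,1),
\]
which extends continuously to $[-1,1]$ via $\psi_f(\pm 1):=f^\infty(\pm 1)$, precisely because $f$ is an admissible integrand. A direct computation gives the key identity $\psi_f(\beta(y))(1+|y|)=f(y)$ for $y\in\R$. Next, introduce the measures $\lambda_n\in\Mcal(\Omega\times[-1,1])$ defined by
\[
\langle\lambda_n,\varphi\rangle:=\int_\Omega \varphi\bigl(\zeta,\beta(u_n(\zeta))\bigr)\bigl(1+|u_n(\zeta)|\bigr)\,d\mu(\zeta).
\]
These are uniformly bounded in total variation on any compact subset of $\Omega$ by the $L^1_{\loc}(\mu)$-bound on $u_n$. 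Banach--Alaoglu together with a $\sigma$-compact exhaustion of $\Omega$ and a diagonal extraction yield a subsequence $\lambda_{n_k}$ weak-$*$ converging to some $\lambda\in\Mcal^+(\Omega\times[-1,1])$, and a standard disintegration gives $\lambda=\tilde\sigma_\zeta\otimes\pi$ with $\tilde\sigma_\zeta$ a probability measure on $[-1,1]$ for $\pi$-a.e.\ $\zeta$.

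To extract the asserted decomposition, I test $\lambda_n$ against $\phi(\zeta)\psi_f(t)$ for $\phi\in C_c(\Omega)$ and $f\in\Fcal(\R)$ even; this gives convergence of $\int\phi f(u_n)\,d\mu$ to
\[
\int_\Omega \phi(\zeta)\left[\int_{(-1,1)}\psi_f\,d\tilde\sigma_\zeta + f^\infty\bigl(\tilde\sigma_\zeta(\{-1\})+\tilde\sigma_\zeta(\{1\})\bigr)\right]d\pi(\zeta).
\]
Pulling back the interior integral under $\beta^{-1}$ it becomes $\int_\R f(y)(1+|y|)^{-1}\,d\rho_\zeta(y)$, where $\rho_\zeta:=(\beta^{-1})_*(\tilde\sigma_\zeta|_{(-1,1)})$. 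Setting $a(\zeta):=\int_\R(1+|y|)^{-1}\,d\rho_\zeta$, defining $\nu_\zeta$ as the probability measure obtained by normalising $(1+|y|)^{-1}\,d\rho_\zeta$ (arbitrarily where $a(\zeta)=0$), and $dm:=\bigl(\tilde\sigma_\zeta(\{-1\})+\tilde\sigma_\zeta(\{1\})\bigr)\,d\pi$, the identity $a\,d\pi=d\mu$ follows by testing against the admissible integrand $f\equiv 1$ (which satisfies $f^\infty=0$). This ensures $\nu_\zeta$ is a probability measure on $\R$ for $\mu$-a.e.\ $\zeta$, and rewrites the limit as $\int\phi\langle\nu_\zeta,f\rangle\,d\mu+\int\phi f^\infty\,dm$, as required.

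The main technical obstacle is of a bookkeeping nature: one must choose the subsequence independently of $f$. This is handled by noting that the embedding $f\mapsto\psi_f$ identifies even members of $\Fcal(\R)$ with the separable Banach space of even functions in $C([-1,1])$; a single diagonal subsequence then converges for a countable dense family, and the identity for a general admissible $f$ is recovered by uniform approximation, using the uniform bound $\|\lambda_{n_k}\|_{\TV}\leq C$ on compacts. Once these points are settled, the decomposition holds simultaneously for all even $f\in\Fcal(\R)$, completing the proof.
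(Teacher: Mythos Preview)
Your argument is correct and is precisely the DiPerna--Majda/Alibert--Bouchitt\'e compactification that underlies the result. The paper's own proof consists of a direct citation to Theorem~2.5 and Remark~2.6 of Alibert--Bouchitt\'e, with the observation that evenness of $f$ and independence of $f$ from the base variable simplify the statement. In other words, you have written out in detail the very argument the paper treats as a black box; there is no genuinely different route here, only a difference in level of self-containment.

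One small remark on presentation: your final paragraph about choosing the subsequence independently of $f$ is redundant. Once you have extracted a subsequence along which $\lambda_{n_k}\stackrel{*}{\rightharpoonup}\lambda$ locally in $\Mcal(\Omega\times[-1,1])$ via Banach--Alaoglu, you may test against \emph{any} $\phi\otimes\psi_f\in C_c(\Omega)\otimes C([-1,1])$, so the subsequence is already uniform in $f$ without further diagonalisation. This is not a gap, just an unnecessary step.
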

\begin{proof}
	We apply Theorem~2.5. and Remark~2.6 in~\cite{AlibertBouchitte}, simplified by the fact that $f$ is even and that we only test against functions $f$ independent of $x$. Note that the weak$^*$ convergence then has to be understood in the sense of compactly supported test functions $\phi\in {\Ccal}_0(\Omega,\R)$.
\end{proof}

The above proposition can in fact be generalised to say that every bounded sequence of generalised Young measures posses\comMarie{ses} a weak$^*$ convergent subsequence, cf. \cite[Corollary~2.]{KristensenRindler2012}
\begin{proposition}\label{prop:KristenRindler}
Let $(\nu^n,\,m^n)$ be a sequence of generalised Young measures on $\Omega$ such that
\begin{itemize}
    \item The map $x\mapsto\langle\nu_x^n,|\cdot|\rangle$ is uniformly bounded in $L^1$,
    \item The sequence $(m^n(\bar\Omega))$ is uniformly bounded.
\end{itemize}
Then there is a generalised Young measure $(\nu,m)$ on $\Omega$ such that $(\nu^n,m^n)$ converges weak$^*$ to $(\nu,m)$.
\end{proposition}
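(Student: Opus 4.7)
The plan is to follow the classical duality approach: realize each pair $(\nu^n,m^n)$ as a continuous linear functional on a suitable separable Banach space of parameterised admissible integrands, extract a weak$^*$ convergent subsequence via Banach--Alaoglu, and then verify that the limit functional is represented by a generalised Young measure.

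To this end, I would introduce the space $\mathbf{E}(\Omega)$ of continuous functions $\phi:\bar\Omega\times\R\to\R$ such that $\phi(x,\xi)/(1+|\xi|)$ admits a continuous extension to $\bar\Omega\times(\R\cup\{\pm\infty\})$, equipped with the norm $\|\phi\|_{\mathbf{E}}:=\sup_{(x,\xi)}|\phi(x,\xi)|/(1+|\xi|)$. A standard Stone--Weierstrass argument on the appropriate compactification shows $\mathbf{E}(\Omega)$ is separable. For each $n$ define
$$T_n(\phi)=\int_\Omega\langle\nu^n_x,\phi(x,\cdot)\rangle\,dx+\int_{\bar\Omega}\phi^\infty(x,\cdot)\,dm^n(x),$$
where $\phi^\infty$ denotes the recession function in the $\xi$-variable. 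The linear growth bound $|\phi(x,\xi)|\leq\|\phi\|_{\mathbf{E}}(1+|\xi|)$ combined with the first hypothesis controls the first summand uniformly in $n$, while boundedness of $\phi^\infty$ on the sphere together with the second hypothesis controls the recession term. Therefore $\|T_n\|_{\mathbf{E}^*}\leq C$ uniformly, and Banach--Alaoglu in the separable setting yields a subsequence with $T_{n_k}\rightharpoonup^\ast T$ for some $T\in\mathbf{E}(\Omega)^*$.

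It remains to recover the structure $(\nu,m)$ of the limit. Positivity of each $T_n$ on nonnegative integrands passes to the limit, so by a Riesz-type representation $T$ corresponds to a finite nonnegative Radon measure on $\bar\Omega\times(\R\cup\{\pm\infty\})$. Decomposing this into its restrictions to $\bar\Omega\times\R$ and to the points at infinity, and then disintegrating the first piece against its projection onto $\bar\Omega$, produces the candidates $\nu_x$ and $m$. To confirm that $\nu_x$ is a probability measure for a.e.\ $x$, I would test with $\phi(x,\xi)=\psi(x)$ for arbitrary $\psi\in \Ccal_c(\Omega)$ (so $\phi^\infty\equiv 0$); since $\langle\nu_x^n,1\rangle=1$, the limit relation becomes $\int\psi(x)\langle\nu_x,1\rangle\,dx=\int\psi(x)\,dx$, which by the fundamental lemma of the calculus of variations gives $\langle\nu_x,1\rangle=1$ a.e.

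The step I expect to be most delicate is this last structural verification: checking that the dual limit $T$ genuinely arises from a weak$^*$-measurable parameterised family of \emph{probability} measures together with a nonnegative Radon concentration measure, rather than from some less structured dual element. Separability of $\mathbf{E}(\Omega)$ is essential here, as it permits weak$^*$ convergence to be tested on a countable dense family, from which both the weak$^*$-measurability of $x\mapsto\nu_x$ and the almost-sure normalisation $\nu_x(\R)=1$ can be derived on a common full-measure set.
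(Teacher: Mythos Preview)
The paper does not actually supply a proof of this proposition; it is stated with a reference to \cite[Corollary~2]{KristensenRindler2012} and used as a black box. Your proposal sketches precisely the standard duality argument that underlies the Kristensen--Rindler result (and already Alibert--Bouchitt\'e before it): realise the pairs $(\nu^n,m^n)$ as uniformly bounded functionals on a separable Banach space of admissible integrands, apply sequential Banach--Alaoglu, and then disintegrate the limiting positive functional to recover the parametrised family and the concentration measure.

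One point to tighten in your sketch: after obtaining the limit functional $T$ as a nonnegative Radon measure on the compactification, you disintegrate its restriction to $\bar\Omega\times\R$ against the projection onto $\bar\Omega$. The disintegration then automatically yields probability measures $\nu_x$, but with respect to the projected measure $\pi$, not Lebesgue measure. Your test with $\phi(x,\xi)=\psi(x)$ actually identifies $\pi$ with Lebesgue measure on $\Omega$ rather than showing $\langle\nu_x,1\rangle=1$ (which already holds by disintegration); you should phrase it that way. You also need a word on why $\pi$ has no singular part concentrated on $\partial\Omega$, or else absorb any such mass into $m$. With these clarifications your argument is sound and is essentially the one in the cited reference.
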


\subsection{The model}
We consider the growth-fragmentation equation under a general form:
\begin{equation}\label{eq:growthfragIntro}
\begin{aligned}
    \partial_t n(t,x) + \partial_x(g(x)n(t,x)) + B(x)n(t,x) &= \int_x^\infty k(x,y)B(y)n(t,y)\ \dy,\\
    g(0)n(t,0) &=0,\\
    n(0,x) & = n^0(x).
\end{aligned}    
\end{equation}
We  assume $n^0\in\Mcal^{+}(\R_+)$.

\medskip
The fundamental tool in studying the long-time asymptotics with the GRE method is the existence and uniqueness of the first eigenelements $(\lambda, N, \phi)$, i.e. solutions to the following primal and dual eigenproblems.
\begin{equation}\label{eq:primaleproblem}
    \begin{aligned}
    \frac{\partial}{\partial x}(g(x)N(x))+(B(x)+\lambda)N(x) &= \int_x^\infty k(x,y)B(y)N(y)\ \dy\\
    &\hspace{-4cm}g(0)N(0)=0,\;\;\; N(x)>0,\;\;\text{for}\;\; x>0,\;\;\;\int_0^\infty N(x)\dx=1,
    \end{aligned}    
    \end{equation}
   
    \begin{equation}\label{eq:dualeproblem}
    \begin{aligned}
    -g(x)\frac{\partial}{\partial x}(\phi(x))+(B(x)+\lambda)\phi(x) &= B(x)\int_0^x k(y,x)\phi(y)\ \dy\\
    &\hspace{-4cm}\; \phi(x)>0,\;\;\;\int_0^\infty \phi(x)N(x)\dx=1.
    \end{aligned}    
    \end{equation} 

We make the following assumptions on the parameters of the model.
\begin{align}
&B\in W^{1,\infty}(\R_+,\R_+^*),\qquad g\in W^{1,\infty}(\R_+,\R_+^*),\qquad \forall\; x\geq 0,\; g\geq g_0>0,\label{as:B}\\ 
&k\in {\Ccal}_b (\R_+\times\R_+), \qquad \int_0^y k(x,y)\dx=2,\qquad \int_0^y xk(x,y)\dx=y,\label{as:k}\\
&k(x,y<x)=0,\qquad k(x,y>x)>0. \label{as:k2}
\end{align}

\noindent These guarantee in particular existence and uniqueness of a solution $n\in\Ccal(\R_+;L^1_\phi(\R_+))$ for $L^1$ initial data (see e.g.~\cite{Perthame02}), existence of a unique measure solution for data in $\Mcal^+(\R_+)$ (cf. \cite{CarrilloGwiazda_11}), as well as existence and uniqueness of a dominant eigentriplet $(\lambda >0,N(x),\phi(x))$, cf.~\cite{DG}. In particular the functions $N$ and $\phi$ are continuous, $N$ is bounded and $\phi$ has at most polynomial growth. 
In what follows $N$ and $\phi$ will always denote the solutions to problems~\eqref{eq:primaleproblem} and~\eqref{eq:dualeproblem}, respectively. Let us remark that in the $L^1$ setting we have the following conservation law
\begin{equation}\label{eq:L1conservation}
    \intR n_\eps(t,x)e^{-\lambda t}\phi(x)\dx = \intR n^0(x)\phi(x)\dx.
\end{equation}

\subsection{Measure and measure-valued solutions}
Let us observe that there are two basic ways to treat the above model in the measure setting. \comMarie{The first one} is to consider a {\em measure solution}, i.e. a narrowly continuous map $t\mapsto\mu_t\in\Mcal^+(\R_+)$, which satisfies~\eqref{eq:growthfragIntro} in the weak sense, i.e. for each $\psi\in\Ccal^1_c(\R_+\times\R_+)$
\begin{equation}\label{eq:weakmeasuresol}
\begin{aligned}
    -\intR\intR&(\partial_t\psi(t,x)+\partial_x\psi(t,x)g(x))d\mu_t(x)\dt + \intR\intR\psi(t,x)B(x)d\mu_t(x)\dt \\
    &= \intR\intR\psi(t,x)\int_x^\infty k(x,y)B(y)d\mu_t(y)\dx\dt + \intR\psi(0,x) dn^0(x).
\end{aligned}
\end{equation}

Thus a measure solution is a family of time-parameterised non-negative Radon measures on the structure-physical domain $\R_+$. 

\comMarie{The second way is to work with generalised Young measures and corresponding measure-valued solutions.} To prove the generalised relative entropy inequality, which relies on considering a family of non-linear renormalisations of the equation, we choose to work \comMarie{in this second framework.}

A {\em measure-valued solution} is a generalised Young measure $(\nu,m)$, where the oscillation measure is a family of parameterised probabilities over the state domain $\R_+$ such that equation~\eqref{eq:growthfragIntro} is satisfied by its barycenters $\langle\nu_{t,x},\xi\rangle$, i.e. the following equation
\begin{equation}\label{eq:mvSol}
\begin{aligned}
    \partial_t&\left(\langle\nu_{t,x},\xi\rangle + m\right) + \partial_x\left(g(x)(\langle\nu_{t,x},\xi\rangle + m)\right) + B(x)(\langle\nu_{t,x},\xi\rangle + m) \\
    &= \int_x^\infty k(x,y)B(y)\langle\nu_{t,x},\xi\rangle\dy + \int_x^\infty k(x,y)B(y)dm(y)
\end{aligned}
\end{equation}
holds in the sense of distributions on $\R^*_+\times\R^*_+$.

It is proven in \cite{GLM} that equation~\eqref{eq:growthfragIntro} has a unique measure solution. To this solution one can associate a measure-valued solution - for example, given a measure solution $t\mapsto\mu_t$ one can define a measure-valued solution by
\[
\langle\delta_{\left\{\frac{d\mu_t^a}{d\Lcal^1}\right\}},\id\rangle = \mu_t^a,\qquad m = \mu_t^s
\]
where $\frac{d\mu_1}{d\mu_2}$ denotes the Radon-Nikodym derivative of $\mu_1$ with respect to $\mu_2$.

However, clearly, the measure-valued solutions are not unique - since the equation is linear, there is freedom in choosing the Young measure as long as the barycenter satisfies equation~\eqref{eq:mvSol}. For example, a different measure-valued solution can be defined by
\[
\langle\frac12\delta_{\left\{2\frac{d\mu_t^a}{d\Lcal^1}\right\}}+\frac12\delta_{\{0\}},\id\rangle = \mu_t^a.
\]

Uniqueness of measure-valued solution can be ensured by requiring that the generalised Young measure satisfies not only the equation, but also a family of nonlinear renormalisations. This was the case in the work of DiPerna \cite{DiPerna}, see also \cite{DebiecRelEntroSurvey}.

To establish the GRE inequality which will then be used to prove an asymptotic convergence result, we  consider the measure-valued solution generated by a sequence of regularized solutions. This  allows us to use the classical GRE method established in \cite{Pe2007}. Careful passage to the limit will then show that analogous inequalities hold for the measure-valued solution.

\section{GRE inequality}\label{sec3}

In this section we formulate and prove the generalised relative entropy inequality, our main tool in the study of long-time asymptotics for equation~\eqref{eq:growthfragIntro}. We  take advantage of the well-known GRE inequalities in the $L^1$ setting. To do so we consider the growth-fragmentation equation~\eqref{eq:growthfragIntro} for a sequence of regularized data and prove that we can pass to the limit, thus obtaining the desired inequalities in the measure setting.

Let $n_\ep^0\in L^1_\phi(\R_+)$ be a sequence of regularizations of $n^0$ converging weak$^*$ to $n^0$ in the space of measures and such that $\TV(n_\ep^0)\to\TV{(n^0)}$. 
Let $n_\ep$ denote the corresponding unique solution to~\eqref{eq:growthfragIntro} with $n_\ep^0$ as an initial condition. Then for any differentiable strictly convex admissible integrand $H$ we define the usual relative entropy 
	\[
	\Hcal_\ep (t):=\intR \phi(x)N(x) H\left(\f{n_\ep(t,x)e^{-\lambda t}}{N(x)}\right) \dx
	\]
	and entropy dissipation
    \begin{equation*}
        \begin{aligned}
        D^H_\ep(t)=\intR\intR &\phi(x) N(y) B(y) k(x,y)\Biggl\{ H\left(\f{n_\ep(t,y) e^{-\lambda t}}{N(y)}\right)- H\left(\f{n_\ep(t,x)e^{-\lambda t}}{N(x)}\right)\\
        &- H'\left(\f{n_\ep(t,x)e^{-\lambda t}}{N(x)}\right)\left[\f{n_\ep(t,y)e^{-\lambda t}}{N(y)} - \f{n_\ep(t,x)e^{-\lambda t}}{N(x)}\right]\Biggr\}\ \dx\dy.
        \end{aligned}
    \end{equation*}
Then, as shown e.g. in~\cite{MMP1}, one can show that
\begin{equation}\label{eq:classicalGRE1}
\f{d}{dt}\left\{\intR \phi(x) N(x) H\left(\f{n_\ep(t,x)e^{-\lambda t} }{N(x)}\right)\ \dx\right\} =-D^H_\ep(t)
\end{equation}
with the right-hand side being non-positive due to convexity of $H$. Hence the relative entropy is non-increasing. It follows that $\Hcal_\eps(t)\leq\Hcal_\eps(0)$ and, since $H\geq 0$,
\begin{equation}\label{eq:classicalGRE2}
\intR D_\ep^H(t)\ \dt \leq \Hcal_\ep(0).
\end{equation}

In the next proposition we prove corresponding inequalities for the measure-valued solution generated by the sequence $n_\eps$. This result is an analogue of Theorem~5.1 in \cite{GW}.

\begin{proposition}\label{prop:GRE}
With notation as above,
	there exists a subsequence (not relabelled), generating a generalised Young measure $(\nu,m)$ with $m=m_t\otimes \dt$ for a family of positive Radon measures $m_t$, such that 
	\begin{equation}\label{eq:GRE}
	\begin{aligned}
	\lim\limits_{\ep\to 0}\;\intR\chi(t)\Hcal_\ep (t)\ \dt =
	\int_0^\infty\chi(t)\;\biggl(\intR\phi(x)N(x)&\langle\nu_{t,x}(\alpha), H(\alpha)\rangle\dx\\
	&+ \intR \phi(x)N(x)H^\infty dm_t(x)\biggr)\ \dt
	\end{aligned}
	\end{equation}
	for any $\chi\in C_c([0,\infty))$, and
	\begin{equation}\label{eq:dissipGRE}
	\begin{aligned}
	\lim\limits_{\ep\to 0}\int_0^\infty D^H_\ep (t)\ \dt &= \\ &\hspace{-3cm}\intR\intR\intR  \phi(x)N(y)B(y)k(x,y)\langle\nu_{t,y}(\xi)\otimes\nu_{t,x}(\alpha),H(\xi)-H(\alpha)-H'(\alpha)(\xi-\alpha)\rangle\dx \dy \dt\\
	&\hspace{-2cm}+\intR\intR\intR\phi(x)N(y)B(y)k(x,y)\langle\nu_{t,x}(\alpha),H^\infty - H'(\alpha)\rangle dm_t(y)\dx\dt \geq 0.
	\end{aligned}
	\end{equation}
	We denote the limits on the left-hand sides of the above equations by $\intR\chi(t)\Hcal(t)\ \dt$ and $\intR D^H(t)\ \dt$, respectively, thus defining the measure-valued relative entropy and entropy dissipation for almost every $t$. We further set
	\begin{equation}\label{eq:dissip0}
	    \Hcal(0)\coloneqq\intR\phi(x)N(x)H\left(\frac{(n^0)^a(x)}{N(x)}\right)\dx + \intR\phi(x)H^\infty\left(\frac{(n^0)^s}{|(n^0)^s|}(x)\right)d|(n^0)^s(x)|.
	\end{equation}
	We then have
	\begin{equation}\label{eq:dissipGRE:measure}
	\frac{d}{dt}\Hcal (t) \leq 0 \quad \text{in the sense of distributions}, %\text{for a.e.\ $t>0$},
	\end{equation}
and
	\begin{equation}\label{eq:entropydissipationinL1}
	\int_0^\infty D^H (t) dt \leq \Hcal (0).
	\end{equation}
\end{proposition}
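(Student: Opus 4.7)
The plan is to realise Proposition~\ref{prop:GRE} as the passage to the limit $\eps\to 0$ in the classical identity~\eqref{eq:classicalGRE1} and inequality~\eqref{eq:classicalGRE2}. Setting $u_\eps(t,x):=n_\eps(t,x)e^{-\lambda t}/N(x)$, the conservation law~\eqref{eq:L1conservation} together with the assumption $\TV_\phi(n_\eps^0)\to\TV_\phi(n^0)$ yields
\[
\int_{\R_+}\phi(x)N(x)u_\eps(t,x)\,\dx \;\leq\; C
\]
uniformly in $t$ and $\eps$, so that $u_\eps$ is uniformly bounded in $L^1([0,T]\times\R_+;\phi N\,\dx\dt)$ for every $T>0$. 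Proposition~\ref{prop:Alibert} then delivers, along a subsequence, a generalised Young measure $(\nu_{t,x},m)$ on $\R_+\times\R_+$ such that
\[
H(u_{\eps_k})\phi(x)N(x)\,\dx\dt \;\stackrel{*}{\rightharpoonup}\; \langle\nu_{t,x},H\rangle\phi(x)N(x)\,\dx\dt + H^\infty m
\]
for every even admissible integrand $H\in\Fcal(\R)$.

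To obtain the disintegration $m=m_t\otimes\dt$, I would exploit time regularity: by~\eqref{eq:classicalGRE1} the map $t\mapsto\Hcal_\eps(t)$ is non-increasing and uniformly bounded by $\Hcal_\eps(0)$. Testing the above representation with $H(s)=|s|$ (for which $H^\infty=|\cdot|$) shows that the projection of $m$ onto the time axis has bounded mass on every compact interval; combined with the pointwise-in-$t$ bound on $\int\phi Nu_\eps\,\dx$, a standard slicing argument produces a Borel family $(m_t)_{t\geq 0}$ with $m=m_t\otimes\dt$. Then~\eqref{eq:GRE} follows at once by applying Proposition~\ref{prop:Alibert} to $H$ and testing against $\chi(t)$.

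The main obstacle lies in~\eqref{eq:dissipGRE}, because the integrand couples $u_\eps$ at two distinct spatial points $x$ and $y$. My plan is to view $\bigl(u_\eps(t,x),u_\eps(t,y)\bigr)$ as an $\R^2$-valued sequence on $[0,T]\times\R_+\times\R_+$ equipped with the bounded continuous weight $\phi(x)N(y)B(y)k(x,y)$ provided by~\eqref{as:B}--\eqref{as:k2}, and to extract, by a vector-valued variant of Proposition~\ref{prop:Alibert}, a joint generalised Young measure $\mu_{t,x,y}$ on $\R^2$. Since the first component is independent of $y$ while the second is independent of $x$, a Fubini-type decoupling argument in the spirit of the theory of product Young measures identifies the oscillation part of $\mu_{t,x,y}$ with the tensor product $\nu_{t,x}(\alpha)\otimes\nu_{t,y}(\xi)$ and the two concentration pieces with $\nu_{t,x}(\alpha)\otimes m_t(dy)$ and $m_t(dx)\otimes\nu_{t,y}(\xi)$ respectively, the purely singular diagonal contribution being absent thanks to the continuity of $k$. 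Splitting the integrand as
\[
H(\xi)-H(\alpha)-H'(\alpha)\xi+H'(\alpha)\alpha
\]
and passing to the limit term by term -- using boundedness of $H'$ (a consequence of the at-most-linear growth of $H$) to control the cross product $H'(u_\eps(t,x))u_\eps(t,y)$ -- then yields~\eqref{eq:dissipGRE}, the non-negativity of the limit following from convexity of $H$.

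Finally,~\eqref{eq:dissipGRE:measure} is obtained by passing to the limit in the integration-by-parts identity
\[
\int_0^\infty\chi'(t)\Hcal_\eps(t)\,\dt \;=\; \int_0^\infty\chi(t)D_\eps^H(t)\,\dt \;\geq\; 0,
\]
valid for any non-negative $\chi\in\Ccal_c^1((0,\infty))$, thanks to~\eqref{eq:GRE} and~\eqref{eq:dissipGRE}. For~\eqref{eq:entropydissipationinL1}, the classical bound $\int_0^\infty D_\eps^H\,\dt\leq\Hcal_\eps(0)$ combined with the Reshetnyak-type continuity $\Hcal_\eps(0)\to\Hcal(0)$ -- a direct consequence of the $\TV_\phi$-convergence of $n_\eps^0$ together with the recession structure in the definition of $\Hcal(0)$ -- and with~\eqref{eq:dissipGRE} applied via a cutoff $\chi_T\nearrow 1$ delivers the claim.
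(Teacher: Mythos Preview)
Your overall strategy tracks the paper's: define $u_\eps=n_\eps e^{-\lambda t}/N$, extract a generalised Young measure via Proposition~\ref{prop:Alibert}, disintegrate $m=m_t\otimes\dt$, and close with Reshetnyak continuity for $\Hcal_\eps(0)\to\Hcal(0)$. These parts are fine. The genuine gap is in your handling of the mixed term $H'(u_\eps(t,x))\,u_\eps(t,y)$ in the dissipation.

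You propose to extract a joint generalised Young measure $\mu_{t,x,y}$ for the $\R^2$-valued sequence $\bigl(u_\eps(t,x),u_\eps(t,y)\bigr)$ and then invoke a ``Fubini-type decoupling'' to identify its oscillation part with $\nu_{t,x}\otimes\nu_{t,y}$. This factorisation is \emph{not} automatic. Product-Young-measure theorems require the two component sequences to depend on \emph{disjoint} base variables, whereas here both factors share the time variable~$t$. Temporal oscillations can therefore correlate the two components, so the joint Young measure may sit on a curve in the $(\alpha,\xi)$-plane rather than split as a tensor product. (A toy illustration: if $u_\eps(t,x)=\sin(t/\eps)$, constant in $x$, then each marginal generates the arcsine law, but the pair $\bigl(u_\eps(t,x),u_\eps(t,y)\bigr)$ concentrates on the diagonal $\{\alpha=\xi\}$.) Boundedness of $H'$ does not help, and the continuity of $k$ has no bearing on concentration in state space, so your remark about the ``purely singular diagonal contribution'' is also unjustified.

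The paper fills precisely this gap with a time-regularity argument you did not deploy here. From the semigroup estimates one has $u_\eps$ uniformly in $W^{1,\infty}\bigl([0,T];(\Mcal^+,\|\cdot\|_{(W^{1,\infty})^*})\bigr)$, so for $f_2\in W^{1,\infty}$ the map $t\mapsto\int_\eta^K f_2(y)u_\eps(t,y)\,\dy$ is equicontinuous and converges \emph{strongly} in $\Ccal([0,T])$ by Arzel\`a--Ascoli. Writing the mixed term against separable test functions $f_1(x)f_2(y)$, one pairs this strongly convergent $y$-factor with the weakly convergent $x$-factor $\int_\eta^K f_1(x)H'(u_\eps(t,x))\,\dx$ (note $(H')^\infty=0$) and passes to the limit in the product; density of separable functions in $\Ccal_b([\eta,K]^2)$ and monotone convergence to remove the cutoffs $\eta,K,T$ then yield~\eqref{eq:dissipGRE}. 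You already invoked time regularity for the disintegration of $m$; the point is that it is indispensable here, and your product-Young-measure shortcut does not replace it.
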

\medskip
\begin{proof}
The function $t\mapsto\intR n_\eps(t,x)e^{-\lambda t}\phi(x)\dx$ is constant and the function $N$ is strictly positive on $(0,\infty)$. Therefore the sequence $u_\eps(t,x)\coloneqq\frac{n_\eps(t,x)e^{-\lambda t}}{N(x)}$ is uniformly bounded in $L^\infty(\R_+;L^1_{\phi,\loc}(\R_+))$. Hence we can apply Proposition~\ref{prop:Alibert} to obtain a generalised Young measure $(\nu,m)$ on $\R_+\times\R_+$. 

%Applying Theorem~1.5.1 in~\cite{Evans} we can write the slicing measure for $m$, $m(\dt,\dy)= \tilde m_t(\phi (y)\dy)\mathrm{d}\gamma (t)$. Now since $u_\eps \in L^\infty([0,T];L^1_{\loc}(\R_+;\phi))$, we have $m\in L^\infty([0,T];\Mcal(\R_+;\phi))$, so that $\gamma$ is absolutely continuous with respect to the Lebesgue measure. By virtue of the Radon-Nikodym theorem we can write $m(\dt,\dy)=m_t(\phi(y)\dy)\dt$ for some parameterised family $m_t$ of positive Radon measures.

Since $u_\eps \in L^\infty(\R_+;L^1_{\phi,\loc}(\R_+))$, we have $m\in L^\infty(\R_+;\Mcal(\R_+;\phi))$.
By a standard disintegration argument (see for instance \cite[Theorem~1.5.1]{Evans}) we can write the slicing measure for $m$,
$m(\dt,\dx) = m_t(\dx)\otimes\dt$, where the map $t\mapsto m_t\in\Mcal^+(\R_+;\phi)$ is measurable and bounded.

By Proposition~\ref{prop:Alibert} we have the weak$^*$ convergence
\begin{equation*}
H(u_\eps(t,x))(\dt\otimes\phi(x)\dx)\;\stackrel{*}{\rightharpoonup}\; \langle\nu_{t,x},H\rangle(\dt\otimes\phi(x)\dx) + H^\infty m.
\end{equation*}
Testing with $(t,x)\mapsto\chi(t)N(x)$ where $\chi\in\Ccal_c(\R_+)$, we obtain~\eqref{eq:GRE}. 
Further, the convergence $\intR\chi(t)\Hcal_\eps(t)\dt\to\intR\chi(t)\Hcal_\eps(t)\dt$ implies~\eqref{eq:dissipGRE:measure}, since for $\Hcal_\eps$ we have the corresponding inequality~\eqref{eq:classicalGRE1}. 

\medskip
We  now investigate the limit as $\eps\to0$ of $\intR D_\eps^H(t)\dt$. Denoting $\Phi(x,y)\coloneqq k(x,y)N(y)B(y)$ we have
\begin{equation}\label{eq:entropydissipation}
\begin{aligned}
    D_\eps^H(t) = \intR\intR\Phi(x,y)\phi(x)[H(u_\eps(t,y))-H(u_\eps(t,x))&-H'(u_\eps(t,x))u_\eps(t,y)\\
    &+ H'(u_\eps(t,x))u_\eps(t,x)]\dx\dy.
\end{aligned}
\end{equation}
We consider each of the four terms of the sum separately on the restricted domain $[0,T]\times[\eta,K]^2$ for fixed $T>0$ and $K>\eta>0$. 
Let $D_{\eps,
\eta,K}^H$ denote the entropy dissipation with the integrals of~\eqref{eq:entropydissipation} each taken over the subsets $[\eta,K]$ of $\R_+$.

We now apply Proposition~\ref{prop:Alibert} to the sequence $u_\eps$, the measure $\dt\otimes\phi(x)\dx$ on the set $[0,T]\times [\eta,K]$. The first two and the last integrands of $D_{\eps,\eta,K}^H(t)$ depend on $t$ and only either on $x$ or on $y$. Therefore we can pass to the limit as $\eps\to0$ by Proposition~\ref{prop:Alibert} using a convenient test function. More precisely, testing with
$(t,x)\mapsto\int_\eta^K\Phi(x,y)\dy$, we obtain the convergence
\begin{equation*}
    \begin{aligned}
    -\int_0^T\int_\eta^K\int_\eta^K\Phi(x,y)\phi(x)H(u_\eps(t,x))\dy\dx\dt\longrightarrow&-\int_0^T\int_\eta^K\int_\eta^K\Phi(x,y)\phi(x)\langle\nu_{t,x},H\rangle\dy\dx\dt\\
    &-\int_0^T\int_\eta^K\int_\eta^K\Phi(x,y)\phi(x)H^\infty dm_t(x)\dy\dt,
    \end{aligned}
\end{equation*}
and, noticing that the recession function of $\alpha\mapsto\alpha H'(\alpha)$ is $H^\infty$,
\begin{equation*}
    \begin{aligned}
    &\int_0^T\int_\eta^K\int_\eta^K\Phi(x,y)\phi(x)H'(u_\eps(t,x))u_\eps(t,x)\dy\dx\dt\longrightarrow\\
    &\int_0^T\int_\eta^K\int_\eta^K\Phi(x,y)\phi(x)\langle\nu_{t,x},\alpha H'(\alpha)\rangle\dy\dx\dt
    +\int_0^T\int_\eta^K\int_\eta^K\Phi(x,y)\phi(x)H^\infty dm_t(x)\dy\dt.
    \end{aligned}
\end{equation*}
Likewise, using $(t,y)\mapsto\frac{1}{\phi(y)}\int_\eta^K\Phi(x,y)\phi(x)\dx$, we obtain
\begin{equation*}
    \begin{aligned}
    \int_0^T\int_\eta^K\int_\eta^K\Phi(x,y)\phi(x)H(u_\eps(t,y))\dx\dy\dt\rightarrow&\int_0^T\int_\eta^K\int_\eta^K\Phi(x,y)\phi(x)\langle\nu_{t,y},H\rangle\dx\dy\dt\\
    &+\int_0^T\int_\eta^K\int_\eta^K\Phi(x,y)\phi(x)H^\infty dm_t(y)\dx\dt.
    \end{aligned}
\end{equation*}

There remains the term of $D_{\eps,\eta,K}^H$ in which the dependence on $u_\eps$ combines $x$ and $y$. To deal with this term we separate variables by testing against functions of the form $f_1(x)f_2(y)$. We then consider
\begin{equation*}
    \begin{aligned}
    -\int_0^T\iint_{[\eta,K]^2}f_1(x)f_2(y)H'(u_\eps(t,x))u_\eps(t,y)\dx\dy\dt &\\
    &\hspace{-5cm}=-\int_0^T\left(\int_\eta^K f_1(x)H'(u_\eps(t,x))\dx\right)\left(\int_\eta^K f_2(y)u_\eps(t,y)\dy\right)\dt.
    \end{aligned}
\end{equation*}
The integrands are now split, one containing the $x$ dependence, and one the $y$ dependence. However, extra care is required here to pass to the limit. As the Young measures depend both on time and space, it is possible for the oscillations to appear in both directions. We therefore require appropriate time regularity of at least one of the sequences to guarantee the desired behaviour of the limit of the product.

Such requirement is met by noticing that since $u_\eps\in\Ccal([0,T];L^1_\phi(\R_+))$ uniformly, we have $u_\eps$ uniformly in $W^{1,\infty}([0,T];(\Mcal^+(\mathcal{\R_+}),\norm{\cdot}_{(W^{1,\infty})^*}))$, cf.~\cite[Lemma~4.1]{CarrilloGwiazda_11}. Assuming $f_2\in W^{1,\infty}(\R_+)$ we therefore have
\[
\left(t\mapsto\int_\eta^K f_2(y)u_\eps(t,y)\dy\right)\in W^{1,\infty}([0,T]).
\]
\noindent This in turn implies strong convergence of $\int_\eta^K f_2(y)u_\eps(t,y)\dy$ in $\Ccal([0,T])$, by virtue of Arz\'{e}la-Ascoli theorem.
Therefore we have (noting that $(H')^\infty\equiv0$ by sublinear growth of $H$)
\begin{equation*}
    \begin{aligned}
        -\int_0^T\iint_{[\eta,K]^2}f_1(x)f_2(y)H'(u_\eps(t,x))u_\eps(t,y)\dx\dy\dt &\\
    &\hspace{-7cm}=-\int_0^T\left(\int_\eta^K f_1(x)H'(u_\eps(t,x))\dx\right)\left(\int_\eta^K f_2(y)u_\eps(t,y)\dy\right)\dt\\
    &\hspace{-7cm}\longrightarrow-\int_0^T\left(\int_\eta^K f_1(x)\langle\nu_{t,x},H'\rangle\dx\right)\left(\int_\eta^K f_2(y)\langle\nu_{t,y},\id\rangle\dy\right)\dt\\
    &\hspace{-6cm}-\int_0^T\left(\int_\eta^K f_1(x)\langle\nu_{t,x},H'\rangle\dx\right)\left(\int_\eta^K f_2(y)dm_t(y)\right)\dt\\
    &\hspace{-7cm}=-\int_0^T\iint_{[\eta,K]^2}f_1(x)f_2(y)\langle\nu_{t,x},H'(\alpha)\rangle\langle\nu_{t,y},\xi\rangle\dx\dy \\
    &\hspace{-6cm}-\int_0^T\iint_{[\eta,K]^2}f_1(x)f_2(y)\langle\nu_{t,x},H'(\alpha)\rangle dm_t(y)\dx\dt.
    \end{aligned}
\end{equation*}
By density of the linear space spanned by separable functions in the space of bounded continuous functions of $(x,y)$ we obtain
\begin{equation*}
    \begin{aligned}
    -\int_0^T\iint_{[\eta,K]^2}&\Phi(x,y)\phi(x)H'(u_\eps(t,x))u_\eps(t,y)\dx\dy\dt\\
    &\longrightarrow \int_0^T\iint_{[\eta,K]^2}\Phi(x,y)\phi(x)\langle\nu_{t,x},H'(\alpha)\rangle\langle\nu_{t,y},\xi\rangle\dx\dy\dt\\
    &\hspace{1cm}-\int_0^T\iint_{[\eta,K]^2}\Phi(x,y)\phi(x)\langle\nu_{t,x},H'(\alpha)\rangle dm_t(y)\dx\dt.
    \end{aligned}
\end{equation*}
Gathering all the terms we thus obtain the convergence as $\eps\to0$
\begin{equation*}
    \int_0^TD_{\eps,\eta,K}^H(t)\dt\longrightarrow\int_0^TD_{\eta,K}^H(t)\dt
\end{equation*}
with
\begin{equation*}
    \begin{aligned}
    D_{\eta,K}^H(t)\coloneqq&\iint_{[\eta,K]^2}\Phi(x,y)\phi(x)\langle\nu_{t,y}(\xi)\otimes\nu_{t,x}(\alpha), H(\xi)-H(\alpha)-H'(\alpha)(\xi-\alpha)\rangle\dx\dy\\
    &+\iint_{[\eta,K]^2}\Phi(x,y)\phi(x)\langle\nu_{t,x}(\alpha),H^\infty-H'(\alpha)\rangle dm_t(y)\dx.
    \end{aligned}
\end{equation*}
Observe that since $\Phi$ is non-negative and $H$ is convex, the integrand of $D_{\eps,\eta,K}^H$ is non-negative. Hence so is the integrand of the limit. Therefore, by Monotone Convergence, we can pass to the limit $\eta\to0$, $K\to\infty$, and $T\to\infty$ to obtain
	\begin{equation*}
	\begin{aligned}
	0\leq\lim\limits_{\ep\to 0}\;&\int_0^\infty D^H_\ep (t)\ \dt = \\ \intR\intR\intR & \phi(x)N(y)B(y)k(x,y)\langle\nu_{t,y}(\xi)\otimes\nu_{t,x}(\alpha),H(\xi)-H(\alpha)-H'(\alpha)(\xi-\alpha)\rangle\dx \dy \dt\\
	&\hspace{-1cm}+\intR\intR\intR\phi(x)N(y)B(y)k(x,y)\langle\nu_{t,x}(\alpha),H^\infty - H'(\alpha)\rangle dm_t(y)\dx\dt.
	\end{aligned}
	\end{equation*}	

Finally we note that by the Reshetnyak continuity theorem, cf.~\cite{GW, KristensenRindler2010} we have the convergence $\Hcal_\ep(0)\to\Hcal(0)$. Together with~\eqref{eq:classicalGRE2} this implies~\eqref{eq:dissipGRE:measure}. 
\end{proof}

\section{Long-time asymptotics}\label{sec4}
In this section we use the result of the previous section to prove that a measure-valued solution of~\eqref{eq:growthfragIntro} converges to the steady-state solution. More precisely we prove
\begin{theorem}
Let $n^0\in\Mcal(\R_+)$ and let $n$ solve the growth-fragmentation equation~\eqref{eq:growthfragIntro}. Then
\begin{equation}\label{eq:long-time}
\lim\limits_{t\to\infty}\intR\phi(x)\mathrm{d}|n(t,x)-m_0N(x)\mathcal{L}^1| = 0
\end{equation}
where $m_0\coloneqq\intR\phi(x)\mathrm{d}n^0(x)$ and $\mathcal{L}^1$ denotes the $1$-dimensional Lebesgue measure.
\end{theorem}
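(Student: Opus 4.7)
The plan is to adapt the strategy of Gwiazda--Wiedemann~\cite{GW} for the renewal equation, using the generalised GRE inequality of Proposition~\ref{prop:GRE} as the central tool, combined with a time-shift compactness argument and a strict-convexity rigidity lemma for generalised Young measures.

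First, I pass to the rescaled quantity $v(t,\cdot) = e^{-\lambda t}n(t,\cdot)$, which by the measure-setting version of the conservation law~\eqref{eq:L1conservation} satisfies $\int\phi\,dv_t = m_0$ for all $t\geq 0$. For a diverging sequence $T_n\to\infty$ and fixed $\tau>0$, the shifted family $v_n(s,\cdot):=v(T_n+s,\cdot)$ is uniformly bounded in $L^\infty_s\Mcal^+(\R_+;\phi)$. Writing the corresponding rescaled densities $u_n=v_n/N$ (in the Young-measure sense), Propositions~\ref{prop:Alibert}--\ref{prop:KristenRindler} provide a subsequence generating a generalised Young measure $(\bar\nu,\bar m)$ on $[0,\tau]\times\R_+$. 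I then choose a smooth, strictly convex, admissible integrand $H\in\Fcal(\R)$ with $H(m_0)=0$, for instance $H(\alpha)=\sqrt{1+(\alpha-m_0)^2}-1$, for which $H^\infty(\alpha)=|\alpha|$. Since $\int_0^\infty D^H(t)\,dt<\infty$ by Proposition~\ref{prop:GRE}, one has $\int_{T_n}^{T_n+\tau}D^H(t)\,dt\to 0$, and repeating the weak$^*$ passage to the limit from the proof of Proposition~\ref{prop:GRE} applied to the shifted sequence forces the dissipation associated with $(\bar\nu,\bar m)$ to vanish a.e.

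The main obstacle is the ensuing rigidity argument. The vanishing of the two non-negative terms of the limiting dissipation, together with strict positivity of $\phi(x)N(y)B(y)k(x,y)$ for $x<y$ (guaranteed by~\eqref{as:B} and~\eqref{as:k2}), gives, for a.e.~$s$ and a.e.~$x<y$,
\[
\langle\bar\nu_{s,y}\otimes\bar\nu_{s,x},\,H(\xi)-H(\alpha)-H'(\alpha)(\xi-\alpha)\rangle = 0
\quad\text{and}\quad
\langle\bar\nu_{s,x},\,H^\infty-H'(\alpha)\rangle = 0 \text{ on } \supp\bar m_s.
\]
Strict convexity of $H$ forces the first identity to hold only when both measures are Dirac at a common point, hence $\bar\nu_{s,x}=\delta_{a(s)}$ with $a(s)$ independent of $x$; and since $H$ is strictly convex with bounded derivative, $H^\infty-H'(\alpha)>0$ for every finite $\alpha$, so the second identity forces $\bar m=0$.

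Finally, passing the mass conservation to the limit yields $m_0=\int\phi(x)\,dv_t(x) = a(s)\int\phi(x)N(x)\,dx = a(s)$, using the normalisation $\int\phi N=1$, so $a(s)\equiv m_0$. Hence along the subsequence $v(T_n+s,\cdot)\to m_0 N(\cdot)\mathcal{L}^1$ weak$^*$, with Young measure $\delta_{m_0}$ and vanishing concentration; applying Proposition~\ref{prop:Alibert} with $H(\alpha)=|\alpha-m_0|$ (whose recession is $|\cdot|$) upgrades this to convergence in $\phi$-weighted total variation. Since the limit is independent of the chosen subsequence $T_n$, we conclude the full convergence as $t\to\infty$ claimed in~\eqref{eq:long-time}.
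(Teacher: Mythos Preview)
Your approach shares the paper's core strategy---compactness of generalised Young measures, rigidity from vanishing dissipation, identification of the limiting constant via mass conservation, and an upgrade to $\phi$-weighted total variation using $H=|\cdot-m_0|$---but differs in one structural choice and leaves a genuine gap at the end.

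\textbf{Difference in setup.} The paper selects a \emph{single} sequence of times $t_n\to\infty$ along which $D^H(t_n)\to 0$ (which exists since $D^H\in L^1(\R_+)$), and applies compactness to the family $(\nu_{t_n,x},m_{t_n})$ on $\R_+$ alone. You instead shift by $T_n$ and work on a window $[0,\tau]\times\R_+$, using $\int_{T_n}^{T_n+\tau}D^H\to 0$. Both routes reach the rigidity step, but the paper's single-time version is cleaner: with no time integral, the mixed term $\langle\bar\nu_x,H'\rangle\langle\bar\nu_y,\xi\rangle$ converges directly under weak$^*$ convergence of generalised Young measures, whereas your window version still needs the time-regularity argument (Lipschitz in $t$ into $(W^{1,\infty})^*$) to justify passing to the limit in that product term. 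You wave at this by saying ``repeating the weak$^*$ passage from Proposition~\ref{prop:GRE}'', but that proposition passed from $L^1$ functions to a Young measure, not from Young measures to Young measures; a word on why lower semicontinuity (or equality) of the dissipation holds in your setting would be needed.

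\textbf{The gap.} Your last sentence---``since the limit is independent of the chosen subsequence $T_n$, we conclude the full convergence''---does not deliver~\eqref{eq:long-time}. What your argument yields is that for every $T_n\to\infty$ the shifted Young measures on $[0,\tau]\times\R_+$ converge weak$^*$ to $(\delta_{m_0},0)$; testing with $H=|\cdot-m_0|$ then gives only that $\int_0^\tau\intR\phi\,d|v(T_n+s,\cdot)-m_0 N\Lcal^1|\,ds\to 0$, i.e.\ convergence of time-averages, not pointwise convergence as $t\to\infty$. The paper closes this by invoking~\eqref{eq:dissipGRE:measure} with the specific choice $H=|\cdot-m_0|$: the map $t\mapsto\intR\phi\,d|n(t,\cdot)e^{-\lambda t}-m_0 N\Lcal^1|$ is itself the relative entropy $\Hcal(t)$ for that $H$, hence nonincreasing, and monotonicity combined with convergence along a subsequence gives the full limit. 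You should invoke this monotonicity explicitly; the subsequence-uniqueness argument alone is insufficient.
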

\begin{proof}
%The idea of the proof is to apply the relative entropy $\Hcal$ to the difference $n(t,x)-m_0 N(x)e^{\lambda t}$ and observe that it decays in time. We wish to show that it cannot stabilize at any positive value, i.e. it has to decay to $0$.
From inequality~\eqref{eq:entropydissipationinL1} we see that $D^H$ belongs to $L^1(\R_+)$. Therefore there exists a sequence of times $t_n\to\infty$ such that
\[
\lim\limits_{n\to\infty}D\comMarie{^H}(t_n) = 0.
\]
Consider the corresponding sequence of generalised Young measures $(\nu_{t_n,x},m_{t_n})$. Thanks to the inequality $\Hcal(t)\leq\Hcal(0)$ this sequence is uniformly bounded in the sense that
\begin{equation}
    \sup\limits_{n}\left\{\intR\phi(x)N(x)\langle\nu_{t,x}(\alpha),|\alpha|\rangle\dx + \intR\phi(x)N(x)dm_{t_n}(x)\right\}\;<\;\infty.
\end{equation}
Therefore by the compactness property of Proposition~\ref{prop:KristenRindler} there is a subsequence, not relabelled, and a generalised Young measure $(\bar\nu_x,\bar m)$ such that
\[
(\nu_{t_n,x},m_{t_n})\stackrel{\ast}{\rightharpoonup}(\bar\nu_x,\bar m)
\]
in the sense of measures.
We  now show that the corresponding "entropy dissipation"
\begin{equation}\label{Dinfinity}
\begin{aligned}
D_\infty^H\coloneqq\intR\intR&\Phi(x,y)\phi(x)\langle\bar\nu_y(\xi)\otimes\bar\nu_x(\alpha),H(\xi)-H(\alpha)-H'(\alpha)(\xi-\alpha)\rangle\dx\dy\\
&+\intR\intR\Phi(x,y)\phi(x)\langle\bar\nu_x(\alpha),H^\infty-H'(\alpha)\rangle d\bar m(y)\dx
\end{aligned}
\end{equation}
is zero. To this end we  argue that
\[
D_\infty^H = \lim\limits_{n\to\infty}D^H(t_n).
\]
Indeed this follows by the same arguments as in the proof of Proposition~\ref{prop:GRE}. In fact now the "mixed" term poses no additional difficulty as there is no time integral.
It therefore follows that 
\begin{equation}\label{zeroDissipation}
D_\infty^H = 0.
\end{equation}
As $H$ is convex, both integrands in~\eqref{Dinfinity} are non-negative.
Therefore~\eqref{zeroDissipation} implies that both the integrals of $D_\infty^H$ are zero. In particular
\[
\intR\intR H(\xi)-H(\alpha)-H'(\alpha)(\xi-\alpha)d\bar\nu_x(\alpha)d\bar\nu_y(\xi) = 0,
\]
and since the integrand vanishes if and only if $\xi=\alpha$, this implies that the Young measure $\bar\nu$ is a Dirac measure concentrated at a constant. Then the vanishing of the second integral of $D_\infty^H$ implies that $\bar m = 0$. Moreover, the constant can be identified as
\begin{equation}\label{homogeneousDirac}
    m_0\coloneqq\intR\phi(x)dn^0(x)
\end{equation}
by virtue of the conservation in time of
\[
\intR\phi(x)e^{-\lambda t}\langle\nu_{t,x},\cdot\rangle\dx + \intR\phi(x)e^{-\lambda t}dm_t(x).
\]
By virtue of Proposition~\ref{prop:Alibert} with $H=|\cdot-m_0|$ it then follows that
\begin{equation*}
    \lim\limits_{n\to\infty}\;\intR\phi(x)d|n(t_n,x)e^{-\lambda t_n}-m_0N(x)\Lcal^1| = 0,
\end{equation*}
which is the desired result, at least for our particular sequence of times.

Finally, we can argue that the last convergence holds for the entire time limit $t\to\infty$, invoking the monotonicity of the relative entropy $\Hcal$. Indeed, the choice $H=|\cdot-m_0|$ in~\eqref{eq:dissip0} yields the monotonicity in time of
\[
\intR\phi(x)d|n(t,x)e^{-\lambda t}-m_0N(x)\Lcal^1|,
\]
and the result follows.
\end{proof}

\section*{Conclusion}

\comMarie{In this article, we have proved the long-time convergence of measure-valued solutions to the growth-fragmentation equation. This result extends previously obtained results for $L^1_\phi$ solutions~\cite{MMP2}. As for the renewal equation~\cite{GW}, it is based on extending the generalised relative entropy inequality to measure-valued solutions, thanks to recession functions. Generalised Young measures provide an adequate framework to represent the measure-valued solutions and their entropy functionals.}

\comMarie{Under slightly stronger assumptions on the fragmentation kernel $k$, e.g. the ones assumed in~\cite{CCM}, it has been proved  that an entropy-entropy dissipation inequality could be obtained. Under such assumptions, we could obtain in a simple way a stronger result of exponential convergence, see the proof of Theorem~4.1. in~\cite{GW}. However the aboveseen method allows us to extend the convergence to spaces where no spectral gap exists~\cite{bernard:hal-01313817}.}

\comMarie{A specially interesting case of application of this method would be critical cases where the dominant eigenvalue is not unique but is given by a countable set of eigenvalues. It has been proved that for $L^2$ initial conditions, the solution then converges to its projection on the  space spanned by the dominant eigensolutions~\cite{bernard2016cyclic}. In the case of measure-valued initial condition, due to the fact that the equation has not anymore a regularisation effect, the asymptotic limit is expected to be the periodically oscillating measure, projection of the initial condition on the space of measures spanned by the dominant eigensolutions. This is a subject for future work.}

\comTomek{{\bf Acknowledgements.}
T.~D. would like to thank the Institute for Applied Mathematics of the Leibniz University of Hannover for its warm hospitality during his stay, when part of this work was completed. 
\\
This work was partially supported by the Simons - Foundation grant 346300 and the Polish Government MNiSW 2015-2019 matching fund.
The research of T.~D. was supported by National Science Center (Poland) 2014/13/B/ST1/03094. M.D.'s research was supported by the Wolfgang Pauli Institute (Vienna) and the ERC Starting Grant SKIPPER$^{AD}$ (number 306321). P.~G. received support from National Science Center (Poland) 2015/18/M/ST1/00075.}

\bibliographystyle{plain}       % (uses file "plain.bst")
\bibliography{TDebiec_bib}           % expects file "biblio.bib"

\begin{thebibliography}{10}

\bibitem{AlibertBouchitte}
J.~J. Alibert and G.~Bouchitt\'e.
\newblock Non-uniform integrability and generalized {Y}oung measures.
\newblock {\em J. Convex Anal.}, 4(1):129--147, 1997.

\bibitem{ball}
J.~M. Ball.
\newblock A version of the fundamental theorem for {Y}oung measures.
\newblock In {\em P{DE}s and continuum models of phase transitions ({N}ice,
  1988)}, volume 344 of {\em Lecture Notes in Phys.}, pages 207--215. Springer,
  Berlin, 1989.

\bibitem{BellAnderson}
George~I. Bell and Ernest~C. Anderson.
\newblock Cell growth and division: I. a mathematical model with applications
  to cell volume distributions in mammalian suspension cultures.
\newblock {\em Biophysical Journal}, 7(4):329 -- 351, 1967.

\bibitem{bernard2016cyclic}
Etienne Bernard, Marie Doumic, and Pierre Gabriel.
\newblock Cyclic asymptotic behaviour of a population reproducing by fission
  into two equal parts.
\newblock {\em arXiv preprint arXiv:1609.03846}, 2016.

\bibitem{bernard:hal-01313817}
Etienne Bernard and Pierre Gabriel.
\newblock {Asymptotic behavior of the growth-fragmentation equation with
  bounded fragmentation rate}.
\newblock working paper or preprint, May 2016.

\bibitem{BrDeLeSz2011}
Yann Brenier, Camillo De~Lellis, and L{{\'a}}szl{{\'o}} Sz{{\'e}}kelyhidi, Jr.
\newblock Weak-strong uniqueness for measure-valued solutions.
\newblock {\em Comm. Math. Phys.}, 305(2):351--361, 2011.

\bibitem{CCM}
M.~J. C\'aceres, J.~A. Ca\~nizo, and S.~Mischler.
\newblock Rate of convergence to the remarkable state for fragmentation and
  growth-fragmentation equations.
\newblock {\em J. Math. Pures Appl.}, 96(4):334--362, 2011.

\bibitem{CarrilloGwiazda_11}
J.A. Carrillo, R.M. Colombo, P.~Gwiazda, and A.~Ulikowska.
\newblock Structured populations, cell growth and measure valued balance laws.
\newblock {\em Journal of Differential Equations}, 252(4):3245 -- 3277, 2012.

\bibitem{Christoforou2017}
C.~Christoforou and A.~E. Tzavaras.
\newblock Relative entropy for hyperbolic--parabolic systems and application to
  the constitutive theory of thermoviscoelasticity.
\newblock {\em Arch. Rational Mech. Anal.}, Dec 2017.

\bibitem{Dafermos1979}
C.~M. Dafermos.
\newblock The second law of thermodynamics and stability.
\newblock {\em Arch. Rational Mech. Anal.}, 70(2):167--179, 1979.

\bibitem{DebiecRelEntroSurvey}
T.~D{\k e}biec, P.~Gwiazda, K.~{\L}yczek, and A.~{\'S}wierczewska-Gwiazda.
\newblock Relative entropy method for measure-valued solutions in natural
  sciences.
\newblock {\em To appear in Topol. Meth. Nonlin. Anal.}, 2017.

\bibitem{tzavaras1}
S.~Demoulini, D.~M.~A. Stuart, and A.~E. Tzavaras.
\newblock Weak-strong uniqueness of dissipative measure-valued solutions for
  polyconvex elastodynamics.
\newblock {\em Arch. Rational Mech. Anal.}, 205(3):927--961, 2012.

\bibitem{DiPerna}
R.~J. DiPerna.
\newblock Measure-valued solutions to conservation laws.
\newblock {\em Arch. Rational Mech. Anal.}, 88(3):223--270, 1985.

\bibitem{DiPernaMajda}
R.~J. DiPerna and A.~J. Majda.
\newblock Oscillations and concentrations in weak solutions of the
  incompressible fluid equations.
\newblock {\em Comm. Math. Phys.}, 108(4):667--689, 1987.

\bibitem{DG}
M.~Doumic and P.~Gabriel.
\newblock Eigenelements of a general aggregation-fragmentation model.
\newblock {\em Math. Models and Methods Appl. Sci.}, 20(5):757, 2009.

\bibitem{Evans}
L.~C. Evans.
\newblock {\em Weak convergence methods for nonlinear partial differential
  equations}.
\newblock Number~74. American Mathematical Soc., 1990.

\bibitem{FGSW2016}
Eduard Feireisl, Piotr Gwiazda, Agnieszka \'{S}wierczewska Gwiazda, and Emil
  Wiedemann.
\newblock Dissipative measure-valued solutions to the compressible
  {N}avier-{S}tokes system.
\newblock {\em Calc. Var. Partial Differential Equations}, 55(6):Art. 141, 20,
  2016.

\bibitem{Gabriel:2017vl}
P.~Gabriel.
\newblock Measure solutions to the conservative renewal equation.
\newblock {\em Preprint}, 2017.

\bibitem{GwiazdaKreml2018}
P.~{Gwiazda}, O.~{Kreml}, and A.~{{\'S}wierczewska-Gwiazda}.
\newblock {Dissipative measure valued solutions for general conservation laws}.
\newblock {\em ArXiv e-prints}, January 2018.

\bibitem{GLM}
P.~Gwiazda, T.~Lorenz, and A.~Marciniak-Czochra.
\newblock A nonlinear structured population model: Lipshitz continuity of
  measure valued solutions with respect to model ingredients.
\newblock {\em J. Differential Equations}, 248:2703--2735, 2010.

\bibitem{GW}
P.~Gwiazda and E.~Wiedemann.
\newblock Generalized entropy method for the renewal equation with measure
  data.
\newblock {\em Commun. Math. Sci.}, 15(2):577--586, 2016.

\bibitem{GSGW}
Piotr Gwiazda, Agnieszka {\'S}wierczewska-Gwiazda, and Emil Wiedemann.
\newblock Weak-strong uniqueness for measure-valued solutions of some
  compressible fluid models.
\newblock {\em Nonlinearity}, 28(11):3873, 2015.

\bibitem{Kermack1}
W.~O. Kermack and A.~G. McKendrick.
\newblock A contribution to the mathematical theory of epidemics.
\newblock {\em Proc. Roy. Society of London, Series A}, 115(772):700--721,
  1927.

\bibitem{Kermack2}
W.~O. Kermack and A.~G. McKendrick.
\newblock Contribution to the mathematical theory of epidemics. ii. the problem
  of endemicity.
\newblock {\em Proc. Roy. Society of London, Series A}, 138(834):55--83, 1932.

\bibitem{KristensenRindler2010}
J.~Kristensen and F.~Rindler.
\newblock Relaxation of signed integral functionals in {$BV$}.
\newblock {\em Calc. Var. Partial Differential Equations}, 37(1-2):29--62,
  2010.

\bibitem{KristensenRindler2012}
J.~Kristensen and F.~Rindler.
\newblock Characterization of {G}eneralized {G}radient {Y}oung {M}easures
  {G}enerated by {S}equences in {$W^{1,1}$} and {$BV$}.
\newblock {\em Arch. Rational Mech. Anal.}, 197(2):539--598, 2012.

\bibitem{M1}
P.~Michel.
\newblock Existence of a solution to the cell division eigenproblem.
\newblock {\em Math. Models Methods Appl. Sci.}, 16(7, suppl.):1125--1153,
  2006.

\bibitem{MMP1}
P.~Michel, S.~Mischler, and B.~Perthame.
\newblock General entropy equations for structured population models and
  scattering.
\newblock {\em C. R. Math. Acad. Sci. Paris}, 338(9):697--702, 2004.

\bibitem{MMP2}
P.~Michel, S.~Mischler, and B.~Perthame.
\newblock General relative entropy inequality: an illustration on growth
  models.
\newblock {\em J. Math. Pures Appl. (9)}, 84(9):1235--1260, 2005.

\bibitem{mischler:frag}
S.~Mischler and J.~Scher.
\newblock Spectral analysis of semigroups and growth-fragmentation equations.
\newblock {\em Ann. Inst. H. Poincar\'e Anal. Non Lin\'eaire}, 33(3):849--898,
  2016.

\bibitem{Perthame02}
B.~Perthame.
\newblock {\em Kinetic formulation of conservation laws}, volume~21 of {\em
  Oxford Lecture Series in Mathematics and its Applications}.
\newblock Oxford University Press, Oxford, 2002.

\bibitem{Pe2007}
B.~Perthame.
\newblock {\em Transport equations in biology}.
\newblock Frontiers in Mathematics. Birkh{\"a}user Verlag, Basel, 2007.

\bibitem{PR}
B.~Perthame and L.~Ryzhik.
\newblock Exponential decay for the fragmentation or cell-division equation.
\newblock {\em J. Differential Equations}, 210(1):155--177, 2005.

\bibitem{SharpeLotka}
F.~R. Sharpe and A.~J. Lotka.
\newblock A problem in age-distribution.
\newblock {\em Philosophical Magazine}, 21:435--438, 1911.

\bibitem{SinkoStreifer}
J.W. Sinko and W.~Streifer.
\newblock A new model for age-size structure of a population.
\newblock {\em Ecology}, 48(6):910--918, 1967.

\end{thebibliography}

\end{document}